\newtheorem{theorem}{Theorem}[section]
\newtheorem{proposition}[theorem]{Proposition}
\newtheorem{corollary}[theorem]{Corollary}
\newtheorem{lemma}[theorem]{Lemma}
\newtheorem*{claim}{Claim}
\theoremstyle{definition}
\newtheorem{definition}[theorem]{Definition}
\newcommand{\F}{\mathbb{F}}
\renewcommand{\P}{\mathbb{P}}
\newcommand{\A}{\mathbb{A}}
\begin{document}

\title{On the classification of hyperovals}

\author{Florian Caullery}
\address{Institut de Math{\'e}matiques de Luminy, CNRS-UPR9016, 163 av. de Luminy, case 907, 13288 Marseille Cedex 9, France.}
\email[F. Caullery]{florian.caullery@etu.univ-amu.fr}

\author{Kai-Uwe Schmidt}
\address{Faculty of Mathematics, Otto-von-Guericke University, Universit\"atsplatz~2, 39106 Magdeburg, Germany.}
\email[K.-U. Schmidt]{kaiuwe.schmidt@ovgu.de}

\date{11 March 2014 (revised 29 May 2014)}

\subjclass[2010]{Primary: 05B25; Secondary: 11T06, 51E20}


\begin{abstract}
A hyperoval in the projective plane $\P^2(\F_q)$ is a set of $q+2$ points no three of which are collinear. Hyperovals have been studied extensively since the 1950s with the ultimate goal of establishing a complete classification. It is well known that hyperovals in $\P^2(\F_q)$ are in one-to-one correspondence to polynomials with certain properties, called o-polynomials of $\F_q$. We classify o-polynomials of $\F_q$ of degree less than $\frac12q^{1/4}$. As a corollary we obtain a complete classification of exceptional o-polynomials, namely polynomials over $\F_q$ that are o-polynomials of infinitely many extensions of $\F_q$.
\end{abstract}

\maketitle

\section{Introduction and results}

An \emph{arc} in the projective plane $\P^2(\F_q)$ is a set of points of $\P^2(\F_q)$ no three of which are collinear. It is well known that the maximum number of points in an arc in $\P^2(\F_q)$ is $q+1$ for odd $q$ and $q+2$ for even $q$ (see~\cite[Chapter 8]{Hir1998}, for example). Accordingly, an arc of size $q+1$ is called an \emph{oval} and an arc of size $q+2$ is called a \emph{hyperoval}. By a theorem due to Segre~\cite{Seg1955}, every oval in $\P^2(\F_q)$ of odd order is a conic, which at once classifies ovals in $\P^2(\F_q)$ for odd~$q$. In contrast, the classification of hyperovals is a major open problem in finite geometry, which has attracted sustained interest over the last sixty years. For surveys on progress toward this classification we refer to~\cite{Che1996},~\cite{Hir1998}, and~\cite{Pen2003}, for example.
\par
Throughout this paper we let $q$ be a power of two. Hyperovals have a canonical description via polynomials over $\F_q$. 
\begin{definition}
An \emph{o-polynomial} of $\F_q$ is a polynomial $f\in\F_q[x]$ of degree at most $q-1$ that induces a permutation on $\F_q$ satisfying $f(0)=0$ and $f(1)=1$ and
\begin{equation}
\det
\begin{pmatrix}
1 & 1 & 1\\
a & b & c\\
f(a) & f(b) & f(c)
\end{pmatrix}
\ne 0 \quad\text{for all distinct $a,b,c\in\F_q$}.   \label{eqn:det}
\end{equation}
\end{definition}
\par
By a suitable choice of coordinates, we may assume without loss of generality that the points $(1,0,0)$, $(0,1,0)$, $(0,0,1)$, $(1,1,1)$ are contained in a hyperoval. It is well known (and easily verified) that every such hyperoval in $\P^2(\F_q)$ can be written as
\begin{equation}
\{(f(c),c,1):c\in\F_q\}\cup\{(1,0,0),(0,1,0)\},   \label{eqn:hyperoval}
\end{equation}
where $f$ is an o-polynomial of $\F_q$. Conversely, if $f$ is an o-polynomial of $\F_q$, then~\eqref{eqn:hyperoval} is a hyperoval in $\P^2(\F_q)$.
\par
For example, $f(x)=x^2$ is an o-polynomial of $\F_{2^h}$ for all $h>1$. There exist several other infinite families of o-polynomials and some sporadic examples. For a list of known hyperovals, as of 2003, we refer to~\cite{Pen2003}. Since 2003, no new hyperovals have been found.
\par
O-polynomials of $\F_{2^h}$ have been classified for $h\le 5$~\cite{Hal1975},~\cite{OKePen1991},~\cite{PenRoy1994} and monomial o-polynomials of $\F_{2^h}$ have been classified for $h\le 30$~\cite{Gly1989}. There is also a classification of monomial o-polynomials of a certain form, namely those of degree $2^i+2^j$~\cite{CheSto1998} or $2^i+2^j+2^k$~\cite{Vis2010}. O-polynomials of degree at most $6$ are also classified~\cite[Theorem~8.31]{Hir1998}.
\par
Our main result is the following classification of low-degree o-polynomials. Call two polynomials $f,g\in\F_q[x]$ with $f(0)=g(0)=0$ and $f(1)=g(1)=1$ \emph{equivalent} if there exists an $a\in\F_q$ such that
\[
g(x)=\frac{f(x+a)+f(a)}{f(1+a)+f(a)}.
\]
It is readily verified that this equivalence indeed defines an equivalence relation and that it preserves the property of being an o-polynomial of $\F_q$.
\begin{theorem}
\label{thm:main}
If $f$ is an o-polynomial of $\F_q$ of degree less than $\frac{1}{2}q^{1/4}$, then~$f$ is equivalent to either $x^6$ or $x^{2^k}$ for a positive integer~$k$.  
\end{theorem}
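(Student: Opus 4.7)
The plan is to convert the o-polynomial condition into a statement about $\F_q$-rational points of an affine surface and then combine a Lang--Weil / Hasse--Weil estimate with a structural classification of the surfaces that can arise. First, I would form
\[
F(x,y,z) = (x+y)\,f(z) + (x+z)\,f(y) + (y+z)\,f(x),
\]
which in characteristic two is exactly the determinant appearing in~\eqref{eqn:det}. Since $F$ vanishes whenever two of the variables coincide, one may write $F(x,y,z) = (x+y)(y+z)(x+z)\,G(x,y,z)$ for some polynomial $G\in\F_q[x,y,z]$ of total degree $d-2$, where $d=\deg f$. The o-polynomial condition is equivalent to saying that $G$ has no zero $(a,b,c)\in\F_q^3$ with pairwise distinct coordinates.

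The first main step is to show that $G$ cannot possess an absolutely irreducible factor defined over $\F_q$. Such a factor $H$ would define an absolutely irreducible surface in $\A^3$ of degree at most $d-2<\tfrac12\,q^{1/4}$; a Lang--Weil / Cafure--Matera estimate then yields $q^2 + O(d^2\,q^{3/2})$ points on $\{H=0\}$. The intersection of $\{H=0\}$ with each of the ``forbidden'' planes $x=y$, $y=z$, $x=z$ is a curve of degree at most $d-2$ and therefore contributes only $O(dq)$ points by Hasse--Weil. Since $d<\tfrac12\,q^{1/4}$, the main term $q^2$ dominates, producing a triple $(a,b,c)\in\F_q^3$ of pairwise distinct elements with $G(a,b,c)=0$---a contradiction.

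The second main step is the algebraic classification of all $f$ of degree $d<\tfrac12\,q^{1/4}$ whose associated $G$ has no absolutely irreducible $\F_q$-factor. The absence of such a factor forces every absolutely irreducible component of $G$ over $\bar\F_q$ to lie in a nontrivial Frobenius orbit, which should translate into strong constraints on the coefficients of $f$. Using the equivalence relation to substitute $x\mapsto x+a$ and renormalize, I would kill off specific lower-order coefficients of $f$ and reduce to a short list of normal forms; a case analysis by degree should then leave only $f(x)=x^{2^k}$---for which $f(x+a)+f(a)=x^{2^k}$ makes $F$ degenerate in a manifest way---and the sporadic case $x^6$. Prior classifications of monomial o-polynomials such as~\cite{CheSto1998} and~\cite{Vis2010} may be invoked once a (near-)monomial structure has been forced.

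The hard part will be this second step. While Lang--Weil cleanly reduces the problem to an algebraic question about the factorization of $G$, deducing from the mere absence of absolutely irreducible $\F_q$-factors that $f$ must take one of only two very restrictive shapes requires a careful analysis of how the Frobenius can permute the components of $G$ and how that permutation interacts with the specific form of $F$. Handling the sporadic $x^6$ case, whose Galois-theoretic behavior does not fit into the $x^{2^k}$ family, will almost certainly require a separate and more delicate argument.
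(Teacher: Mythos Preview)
Your first step is exactly what the paper does: form $G=\Phi_f$ and use a Lang--Weil type estimate (the paper cites Ghorpade--Lachaud) to show that an absolutely irreducible $\F_q$-factor of $\Phi_f$ forces an $\F_q$-point off the three diagonal planes, contradicting the o-polynomial condition. So far, so good.

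The second step, however, is where all the content lies, and your outline does not identify the mechanisms that actually make it work. The paper does \emph{not} proceed by Frobenius-orbit bookkeeping on the components of $\Phi_f$, nor by using the equivalence $x\mapsto x+a$ to normalise lower-order terms. Instead it splits according to the \emph{leading} degree $d$ of $f$. If $d$ is even but neither $6$ nor a power of two, one homogenises $\Phi_f$ and intersects with the hyperplane $w=0$; the intersection is the curve $\phi_d=0$, where $\phi_d$ is the surface attached to the monomial $x^d$. Hernando--McGuire proved that $\phi_d$ has an absolutely irreducible factor over $\F_2$, and a lemma of Aubry--McGuire--Rodier then lifts this to an absolutely irreducible $\F_q$-factor of $\Phi_f$ itself. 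This reduction to the monomial leading term is the key idea you are missing, and the references you cite (\cite{CheSto1998}, \cite{Vis2010}) are not the relevant ones---it is \cite{HerMcG2012} that does the work.

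When $d=2^k$ the hyperplane trick no longer helps because $\phi_{2^k}$ splits completely into distinct linear forms over $\F_{2^k}$. Here the paper proves directly that $\Phi_f$ is absolutely irreducible (unless every term of $f$ has degree a power of two) by exploiting precisely this linear factorisation of the top piece: one writes a putative factorisation $\Phi_f=PQ$ in homogeneous layers, uses coprimality of the top layers to force a whole block of layers of $P$ to vanish, and then a divisibility argument with the explicit shape of $\phi_{2^k-2I}(x,y,x)$ pins down that $P$ must be a product of some of the linear factors $x+z+\theta(y+z)$. A Lucas-theorem computation then shows such a linear form can divide $\phi_n$ only when $n$ is a power of two. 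After that, Payne's classification of translation hyperovals (all terms of $2$-power degree $\Rightarrow$ monomial) finishes. None of this is captured by ``Frobenius orbits plus case analysis''.

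Finally, you have the difficulty inverted: the $x^6$ case is not delicate at all---the paper simply quotes the elementary classification of degree-$6$ o-polynomials. The genuinely hard case is $d=2^k$.
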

\par
It is well known that $x^6$ is an o-polynomial of $\F_{2^h}$ if and only if $h$ is odd and that $x^{2^k}$ is an o-polynomial of $\F_{2^h}$ if and only if $k$ and~$h$ are coprime.
\par
Now consider polynomials $f\in\F_q[x]$ with the property that $f$ is an o-polynomial of $\F_{q^r}$ for infinitely many $r$; we call such a polynomial an \emph{exceptional} o-polynomial of $\F_q$. Exceptional o-polynomials provide a uniform construction for hyperovals in infinitely many projective planes. Theorem~\ref{thm:main} gives a complete classification of exceptional o-polynomials.
\begin{corollary}
\label{cor:exceptional}
If $f$ is an exceptional o-polynomial of $\F_q$, then $f$ is equivalent to either $x^6$ or $x^{2^k}$ for a positive integer~$k$.  
\end{corollary}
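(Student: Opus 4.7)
The plan is to invoke Theorem~\ref{thm:main} over a sufficiently large extension of $\F_q$ and then to descend the resulting equivalence back to $\F_q$ itself.

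First, I would fix $d = \deg f$ and, using that $f$ is an o-polynomial of $\F_{q^r}$ for infinitely many $r$, pick one such $r$ large enough that $d < \tfrac{1}{2}q^{r/4}$. Theorem~\ref{thm:main} applied over $\F_{q^r}$ then furnishes an $a \in \F_{q^r}$ and a target monomial $g$ satisfying $g(x)\bigl(f(1+a)+f(a)\bigr) = f(x+a)+f(a)$. Since the equivalence preserves degree, either $d = 2^k$ and $g = x^{2^k}$ for a uniquely determined $k$, or $d = 6$ and $g = x^6$.

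Next, I would read off the shape of $f$ as a polynomial by setting $c = f(1+a)+f(a)$, substituting $y = x+a$, and expanding. In the $g = x^{2^k}$ case the Frobenius identity $(y+a)^{2^k} = y^{2^k}+a^{2^k}$ collapses the expression, and the conditions $f(0) = 0$ and $f(1) = 1$ pin down $f(y) = y^{2^k}$ as a polynomial identity, so the equivalence is already realised over $\F_q$ with $a = 0$. In the $g = x^6$ case the characteristic-two expansion $(y+a)^6 = y^6 + a^2 y^4 + a^4 y^2 + a^6$ together with $f(0)=0$ and $f(1)=1$ yields
\[
f(y) = c\,y^6 + c a^2\,y^4 + c a^4\,y^2, \qquad c(1+a^2+a^4) = 1.
\]

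The last step is to show that $a$ can be taken in $\F_q$ in this second case. Since $f \in \F_q[x]$, the coefficients $c$, $ca^2$ and $ca^4$ all lie in $\F_q$, and dividing by $c$ gives $a^2 \in \F_q$; because the squaring map is an automorphism of $\F_q$ in characteristic two, this forces $a \in \F_q$ itself. The very same $a$ then witnesses the equivalence of $f$ with $x^6$ over $\F_q$, completing the proof. The only content beyond Theorem~\ref{thm:main} is this short coefficient computation together with the characteristic-two descent $a^2 \in \F_q \Rightarrow a \in \F_q$, and I do not anticipate any serious obstacle.
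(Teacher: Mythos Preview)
Your argument is correct and follows the paper's intended approach: the paper simply asserts that Theorem~\ref{thm:main} ``gives a complete classification of exceptional o-polynomials'' and offers no further proof, so applying Theorem~\ref{thm:main} over a sufficiently large extension $\F_{q^r}$ is exactly what is meant. Your additional coefficient computation and the descent step $a^2\in\F_q\Rightarrow a\in\F_q$ in the $x^6$ case are a welcome clarification of a point the paper leaves implicit, namely that the equivalence furnished over $\F_{q^r}$ can in fact be realised with $a\in\F_q$.
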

\par
The specialisation of Corollary~\ref{cor:exceptional} to the case that $f$ is a monomial was conjectured by Segre and Bartocci~\cite{SegBar1971} and was recently proved by Hernando and McGuire~\cite{HerMcG2012} (another, much simpler, proof of this case was later given by Zieve~\cite{Zie2013}).

\section{Proof of Theorem~\ref{thm:main}}

We begin with recalling several standard results, for which proofs can be found in~\cite[Chapter~8]{Hir1998}, for example. Our first result is an almost immediate consequence of the definition of an o-polynomial.
\begin{lemma}[{\cite[Corollary 8.23]{Hir1998}}]
\label{lem:odd_powers}
Every o-polynomial of $\F_q$ with $q>2$ has only terms of positive even degree.
\end{lemma}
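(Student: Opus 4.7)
The plan is to reduce the statement to showing that the formal derivative $f'$ is the zero polynomial, since in characteristic two this amounts to all odd-degree coefficients of $f$ vanishing. I would split this into two steps: first establish the special case that the coefficient $a_1$ of $x$ in $f$ equals zero, then deduce $f'(a) = 0$ for every $a \in \F_q$ by applying this special case to the translated o-polynomials supplied by the equivalence relation introduced in the paper.

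For the first step, I would specialize the determinant condition~\eqref{eqn:det} to $a = 0$: it yields $b f(c) \ne c f(b)$ for all distinct $b, c \in \F_q^*$, so the map $x \mapsto f(x)/x$ is injective on $\F_q^*$. Since $f$ permutes $\F_q$ with $f(0) = 0$, this map takes values in $\F_q^*$ and is thus a permutation of $\F_q^*$. Summing therefore gives
\[
\sum_{x \in \F_q^*} \frac{f(x)}{x} = \sum_{y \in \F_q^*} y = 0,
\]
where the last equality uses $q > 2$. Writing $f(x) = \sum_{i=1}^{q-1} a_i x^i$, the left-hand side equals $\sum_i a_i \sum_{x \in \F_q^*} x^{i-1}$. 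The inner sum vanishes for $2 \le i \le q-1$ and equals $q - 1 = 1$ (in characteristic two) when $i = 1$, so the total is $a_1$, forcing $a_1 = 0$.

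For the second step, I would apply the preceding conclusion to the equivalent o-polynomial
\[
g(x) = \frac{f(x+a) + f(a)}{f(1+a) + f(a)}
\]
for each $a \in \F_q$, which is well-defined because $f$ is a permutation. Expanding $(x+a)^i$ by the binomial theorem and reducing the binomial coefficients modulo two shows that the coefficient of $x$ in $f(x+a) + f(a)$ is exactly $f'(a) = \sum_{i \text{ odd}} a_i a^{i-1}$. The first step applied to $g$ forces this linear coefficient to vanish, and since the denominator is nonzero, $f'(a) = 0$. As this holds for every $a \in \F_q$ and $\deg f' \le q - 2 < q$, the polynomial $f'$ is identically zero. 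In characteristic two this is equivalent to $f$ having no odd-degree terms; combined with $f(0) = 0$, this yields the lemma.

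The heart of the argument is the first step, where specializing to $a = 0$ converts the determinant condition into the permutation property of $x \mapsto f(x)/x$, so that power-sum identities over $\F_q^*$ isolate $a_1$. The second step is then essentially formal, leveraging the fact (noted in the paper) that equivalent o-polynomials remain o-polynomials.
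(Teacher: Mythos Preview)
Your argument is correct. The paper itself does not prove this lemma but merely cites \cite[Corollary~8.23]{Hir1998}, noting only that it is ``an almost immediate consequence of the definition of an o-polynomial,'' so there is no in-paper proof to compare against; your two-step reduction via the permutation $x\mapsto f(x)/x$ and the translated o-polynomials is the standard route and matches the spirit of the cited reference.
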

\par
We need the following (easy) classification of o-polynomials of degree~$6$.
\begin{lemma}[{\cite[Theorem 8.31]{Hir1998}}]
\label{lem:degree_6}
If $f$ is an o-polynomial of degree~$6$, then~$f$ is equivalent to $x^6$.
\end{lemma}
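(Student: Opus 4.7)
The plan is to put $f$ in a normal form, characterise the equivalence class of $x^6$ concretely in terms of the coefficients of such a normal form, and then use the equivalence transformation applied to $f$ together with the permutation property to force the required coefficient relation.

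By Lemma~\ref{lem:odd_powers} and the normalisations $f(0)=0$, $f(1)=1$, write $f(x) = \alpha x^6 + \beta x^4 + \gamma x^2$ with $\alpha \in \F_q^{*}$ and $\alpha+\beta+\gamma=1$. Using the characteristic-$2$ identity $(x+a)^6 + a^6 = x^6 + a^2 x^4 + a^4 x^2$, one checks that applying the equivalence transformation to $x^6$ with parameter $a$ produces $(x^6 + a^2 x^4 + a^4 x^2)/(1+a^2+a^4)$. Matching coefficients, $f$ is equivalent to $x^6$ if and only if $\alpha\gamma = \beta^2$, so the task is reduced to establishing this single relation.

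To this end, apply the equivalence transformation to $f$ itself with $a \in \F_q$ satisfying $a^2 = \beta/\alpha$ (well-defined in characteristic~$2$); this kills the $x^4$-coefficient. A short computation shows that the resulting polynomial is
\[
g(x) = \frac{\alpha^2 x^6 + (\alpha\gamma+\beta^2)\,x^2}{\alpha+\alpha\beta+\beta^2}.
\]
Provided the denominator is nonzero, $g$ is an o-polynomial and in particular a permutation of $\F_q$. Writing $g(x) = x^2(g_6 x^4 + g_2)$ and using that $x \mapsto x^4$ is a bijection of $\F_q$, if $g_2 \ne 0$ then $g_6 x^4 + g_2 = 0$ has a unique nonzero solution, giving $g$ two distinct zeros and contradicting injectivity. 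Hence $g_2 = 0$, which rearranges to $\alpha\gamma = \beta^2$.

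Two edge cases remain. If $\beta = 0$: take $a = 0$ (the identity transformation); then injectivity of $f(x) = x^2(\alpha x^4 + \gamma)$ immediately forces $\gamma = 0$ and hence $f = x^6$. If the denominator $\alpha+\alpha\beta+\beta^2$ vanishes: apply the dual transformation with $a^4 = \gamma/\alpha$ to eliminate the $x^2$-coefficient instead and argue symmetrically, provided the associated dual denominator $1 + \sqrt{\alpha\gamma} + \gamma$ is nonzero. The genuinely delicate situation, and the main obstacle of the whole argument, is when both denominators vanish simultaneously; a short calculation pins $(\alpha,\beta,\gamma)$ to a single one-parameter family. This family must be ruled out by a separate observation, for instance by showing that the polynomial $\phi(z) = \alpha z^5 + \beta z^3 + \gamma z$, which the o-polynomial property forces to be a permutation of $\F_q$ (via the determinant condition on triples of the form $(0,x,y)$ together with the fact that $f$ itself is a permutation), fails to be injective for any member of this one-parameter family.
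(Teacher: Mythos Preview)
The paper does not give its own proof of this lemma; it is quoted verbatim from Hirschfeld's book as an ``easy'' known result, so there is nothing in the paper to compare your argument against.

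That said, your main line of argument is correct, and in fact cleaner than you realise: the whole discussion of edge cases at the end is unnecessary. The denominator in the equivalence transformation is $f(1+a)+f(a)$, and since $f$ is already assumed to be an o-polynomial it is in particular a permutation of $\F_q$, so $f(1+a)\ne f(a)$ for \emph{every} $a\in\F_q$. Hence your chosen $a$ with $a^2=\beta/\alpha$ always yields a well-defined o-polynomial $g$, your two-zeros argument forces $g_2=0$, i.e.\ $\alpha\gamma=\beta^2$, and you are finished. The ``$\beta=0$'' case is already subsumed here (the denominator is then just $\alpha\ne 0$), and the ``genuinely delicate situation'' where the denominator vanishes simply cannot occur. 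You may therefore delete the dual transformation, the one-parameter family, and the auxiliary permutation $\phi$ entirely.
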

\par
We also need the following result, originally proved by Payne~\cite{Pay1971} and later by Hirschfeld~\cite{Hir1975} with a different method, classifying translation hyperovals.
\begin{lemma}[{\cite[Theorem 8.41]{Hir1998}}]
\label{lem:lin_o_poly}
Every o-polynomial, in which the degree of every term is a power of two, is in fact a monomial.
\end{lemma}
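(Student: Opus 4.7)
My plan is to translate the o-polynomial condition into a condition on the rational function $\phi(x) = f(x)/x$, and then exploit the rigidity of $\F_2$-linearity to force $f$ to be a monomial.

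The first step is purely formal. Since every term of $f$ has degree a power of $2$ and, by Lemma~\ref{lem:odd_powers}, positive even degree, I would write $f(x) = \sum_{i=1}^{h-1} a_i x^{2^i}$ with $q = 2^h$; in particular $f$ is $\F_2$-linear. Substituting $u = a+b$ and $v = a+c$ (so that $u, v \in \F_q^*$ are distinct whenever $a,b,c$ are), a direct expansion of the determinant in~\eqref{eqn:det}, together with the additivity $f(u+v) = f(u)+f(v)$, telescopes to
\[
\det\begin{pmatrix}1&1&1\\ a&b&c\\ f(a)&f(b)&f(c)\end{pmatrix} = uf(v) + vf(u).
\]
Hence the o-polynomial condition becomes $uf(v) \ne vf(u)$ for all distinct $u, v \in \F_q^*$, i.e.\ $\phi(x) = f(x)/x$ is injective on $\F_q^*$; since $f$ permutes $\F_q$ with $f(0) = 0$, $\phi$ is in fact a permutation of $\F_q^*$.

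The second step is an algebraic factorization. In characteristic~$2$, the identity $u^{2^i-1} + v^{2^i-1} = (u+v) \sum_{j=0}^{2^i-2} u^{2^i-2-j} v^j$ gives
\[
uf(v) + vf(u) = uv(u+v)\,G(u,v), \qquad G(u,v) = \sum_{i=1}^{h-1} a_i \sum_{j=0}^{2^i-2} u^{2^i-2-j} v^j,
\]
so the o-polynomial condition translates into $G(u,v) \ne 0$ for all $(u,v) \in \F_q^* \times \F_q^*$ with $u \ne v$. When $f(x) = x^{2^k}$, we have $G(u,v) = (u^{2^k-1}+v^{2^k-1})/(u+v)$, which vanishes off the diagonal precisely when some nontrivial $(2^k-1)$-th root of unity lies in $\F_q^*$; the avoidance condition is then $\gcd(k,h) = 1$, recovering the known family.

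The heart of the proof, and what I expect to be the main obstacle, is the converse: showing that if at least two of the $a_i$ are nonzero, then $G$ must vanish at some admissible $(u,v)$. A natural strategy is to fix a ratio $t = u/v \in \F_q \setminus \{0,1\}$ and rewrite
\[
v^2\, G(tv, v) \;=\; \sum_{i=1}^{h-1} a_i\, Q_i(t)\, v^{2^i}, \qquad Q_i(t) = \frac{t^{2^i-1}+1}{t+1}.
\]
The condition then demands that this $\F_2$-linear polynomial in $v$ be a permutation of $\F_q$ for \emph{every} $t \in \F_q \setminus \{0,1\}$, a system of $q-2$ simultaneous constraints on the vector $(a_i Q_i(t))_i$. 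The Dickson-matrix determinant of this linearized family is a polynomial in~$t$ of controllable degree, and the task is to show that when $f$ is non-monomial this determinant must acquire a root in $\F_q \setminus \{0,1\}$, producing the desired collision. This step is delicate and requires either an algebro-geometric (Hasse--Weil) estimate or a careful Galois-theoretic analysis of the Dickson matrix; once only a single $a_i$ survives, the normalisation $f(1) = 1$ pins down the coefficient to give $f(x) = x^{2^k}$.
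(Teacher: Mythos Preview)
The paper does not prove this lemma: it is quoted as \cite[Theorem~8.41]{Hir1998} (originally Payne~\cite{Pay1971}, with another proof by Hirschfeld~\cite{Hir1975}), so there is no in-paper argument to compare against. I can only assess your proposal on its own terms.

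Your reduction is correct and standard: for an additive $f$ the determinant collapses to $uf(v)+vf(u)$, so the o-polynomial condition becomes injectivity of $x\mapsto f(x)/x$ on~$\F_q^*$, and the factorisation through $G(u,v)$ is fine. The problem is that you stop exactly where the content begins. You yourself flag the converse as ``the main obstacle'' and then gesture at two routes---a Hasse--Weil estimate on the Dickson determinant in~$t$, or an unspecified ``Galois-theoretic analysis''---without executing either. What remains is a reformulation of the statement, not a proof.

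The Hasse--Weil suggestion in particular cannot deliver the lemma as stated. The result holds for \emph{every}~$q$, with $\deg f$ allowed up to $2^{h-1}=q/2$; a Weil-type bound on the $\F_q$-zeros of a polynomial whose degree is comparable to~$q$ gives nothing. Even if you retreated to the regime $\deg f<\tfrac12 q^{1/4}$ relevant to Theorem~\ref{thm:main}, the Dickson determinant of the family $v\mapsto\sum_i a_iQ_i(t)v^{2^i}$ is an $h\times h$ determinant whose degree in~$t$ you have not controlled, so it is unclear that any point-count argument applies. The classical proofs of Payne and Hirschfeld are exact combinatorial/algebraic arguments that do not pass through asymptotic estimates, and something of that nature is what is actually required here.
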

\par
Now let $f\in\F_q[x]$ and define the polynomial
\begin{align*}
\Phi_f(x,y,z)&=\frac{1}{(x+y)(x+z)(y+z)}\cdot \det
\begin{pmatrix}
1 & 1 & 1\\
x & y & z\\
f(x) & f(y) & f(z)
\end{pmatrix}\\[2ex]
&=\frac{x(f(y)+f(z))+y(f(x)+f(z))+z(f(x)+f(y))}{(x+y)(x+z)(y+z)}.
\end{align*}
The condition~\eqref{eqn:det} is equivalent to the condition that all points in $\A^3(\F_q)$ of the surface defined by
\[
\Phi_f(x,y,z)=0
\]
satisfy $x=y$, $x=z$, or $y=z$. This leads us to the following result, which essentially follows from a refinement of the Lang-Weil bound~\cite{LanWei1954} for the number of $\F_q$-rational points in algebraic varieties.
\begin{proposition}
\label{pro:Weil}
Let $f\in\F_q[x]$ be of degree less than $\frac12q^{1/4}$. If $\Phi_f$ has an absolutely irreducible factor over $\F_q$, then $f$ is not an o-polynomial of $\F_q$. 
\end{proposition}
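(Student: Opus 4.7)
The plan is to run a refined Lang--Weil point count on the surface cut out by an absolutely irreducible factor of $\Phi_f$, and then show that the ``forbidden'' locus, where two of $x,y,z$ coincide, is far too small to absorb all of its $\F_q$-rational points. Any surplus rational point will violate~\eqref{eqn:det}.

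First I would note that the determinant defining $\Phi_f$ vanishes whenever two of $x,y,z$ agree, so $(x+y)(x+z)(y+z)$ really does divide it, and consequently $\deg\Phi_f\le\deg f-2<\tfrac12 q^{1/4}$. Let $F\in\F_q[x,y,z]$ be an absolutely irreducible factor of $\Phi_f$, of some degree $D\le\tfrac12 q^{1/4}$, and let $V\subset\A^3$ be the affine hypersurface $\{F=0\}$. A standard refinement of the Lang--Weil bound for absolutely irreducible affine hypersurfaces (e.g.\ the version of Cafure and Matera) yields
\[
|V(\F_q)|\ge q^2-(D-1)(D-2)\,q^{3/2}-C(D)\,q,
\]
where $C(D)$ grows only polynomially in~$D$. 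Since $D^2<\tfrac14 q^{1/2}$, the middle term is at most $\tfrac14 q^2$, and in the intended range the remaining $C(D)q$ term is of smaller order than $q^2$.

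Next I would bound the rational points of $V$ lying on the forbidden locus $\pi_{xy}\cup\pi_{xz}\cup\pi_{yz}$, where each $\pi_{ij}$ is the plane cutting out the corresponding coordinate equality. If none of $x+y$, $x+z$, $y+z$ coincides (up to scalar) with $F$, then each intersection $V\cap\pi$ is a proper plane curve of degree at most $D$ and thus has at most $Dq$ rational points, contributing $O(q^{5/4})=o(q^2)$ in total. To rule out the degenerate case, note that $\Phi_f$ is symmetric in $x,y,z$, so if one of the three linear forms divides $\Phi_f$, then all of them do. By Lemma~\ref{lem:odd_powers} we may write $f(x)=\tilde f(x^2)$, whence $f(u)+f(v)=\tilde f(u^2)+\tilde f(v^2)$ is divisible by $(u+v)^2$ in characteristic two, and a direct limit computes
\[
\Phi_f(x,x,z)=\frac{f(x)+f(z)}{(x+z)^2},
\]
which is not identically zero unless $f$ is constant. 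Hence $F$ cannot be any of $x+y$, $x+z$, $y+z$.

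Combining the two counts produces a triple $(a,b,c)\in\F_q^3$ with $F(a,b,c)=0$ and $a,b,c$ pairwise distinct. For such a triple, $\Phi_f(a,b,c)=0$, contradicting~\eqref{eqn:det}, so $f$ fails to be an o-polynomial. The main obstacle I anticipate is calibrating the secondary error term $C(D)\,q$ in the Lang--Weil refinement tightly enough to survive under the comparatively tight hypothesis $D<\tfrac12 q^{1/4}$; invoking a hypersurface-specific estimate, in which $C(D)$ is polynomial of low degree in~$D$, appears essential to push the argument through the full range of permitted $q$.
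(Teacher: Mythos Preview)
Your proposal is correct and follows essentially the same approach as the paper: apply an explicit Lang--Weil refinement (the paper invokes Ghorpade--Lachaud with error term $12(d+3)^4q$, which resolves the calibration worry you raise at the end) and subtract the at most $3Dq$ points lying on the diagonal planes. The one cosmetic difference is the check that $x+y$, $x+z$, $y+z$ do not divide $\Phi_f$: the paper does this by a direct partial-derivative computation valid for any $f$ of degree at least~$2$, whereas you first reduce via Lemma~\ref{lem:odd_powers} to the even-degree case and then evaluate $\Phi_f(x,x,z)$ explicitly.
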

\begin{proof}
If $f$ has degree $0$ or $1$, then $f$ is not an o-polynomial by Lemma~\ref{lem:odd_powers}, so assume that $f$ has degree at least $2$. We first show that $\Phi_f$ is not divisible by $x+y$, $x+z$, or $y+z$. Suppose, for a contradiction, that $\Phi_f$ is divisible by $x+y$. Then the partial derivative of 
\[
x(f(y)+f(z))+y(f(x)+f(z))+z(f(x)+f(y))
\]
with respect to $x$ is divisible by $x+y$, or equivalently,
\[
f(y)+f(z)+(y+z)f'(y)=0.
\]
This forces the degree of $f$ to be $0$ or $1$, contradicting our assumption. Hence, by symmetry, $\Phi_f$ is not divisible by $x+y$, $x+z$, or $y+z$. Therefore, $\Phi_f(x,y,x)$, $\Phi_f(x,y,y)$, and $\Phi_f(x,x,z)$ are nonzero polynomials, and so each has at most $d\,q$ zeros in $\A^2(\F_q)$, where $d$ is the degree of $\Phi_f$ (see~\cite[Theorem~6.13]{LidNie1997}, for example).
\par
Now suppose that $\Phi_f$ has an absolutely irreducible factor over $\F_q$. Then, by a refinement of the Lang-Weil bound~\cite{LanWei1954} due to Ghorpade and Lachaud~\cite[11.3]{GhoLau2002}, the number of points in $\A^3(\F_q)$ of the surface defined by $\Phi_f(x,y,z)=0$ is at least
\[
q^2-(d-1)(d-2)q^{3/2}-12(d+3)^4\,q.
\]
Hence the number of such points that are not on one of the planes $x=y$, $x=z$, or $y=z$ is at least
\[
q^2-(d-1)(d-2)q^{3/2}-12(d+3)^4\,q-3dq,
\]
which is positive since
\[
0\le d\le\tfrac12q^{1/4}-3.
\]
Then our remarks preceding the proposition imply that $f$ is not an o-polynomial of $\F_q$.
\end{proof}
\par
In order to prove Theorem~\ref{thm:main}, we first use the constraints given by Lemmas~\ref{lem:odd_powers},~\ref{lem:degree_6}, and~\ref{lem:lin_o_poly} and then show that in all remaining cases, $\Phi_f$ has an absolutely irreducible factor over $\F_q$ unless $f$ is one of the polynomials in Theorem~\ref{thm:main}. To do so, we frequently use the polynomials
\begin{equation}
\phi_j(x,y,z)=\frac{x(y^j+z^j)+y(x^j+z^j)+z(x^j+y^j)}{(x+y)(x+z)(y+z)}.   \label{def_phi}
\end{equation}
Then, writing 
\[
f(x)=\sum_{i=0}^da_ix^i,
\]
we have
\[
\Phi_f(x,y,z)=\sum_{i=0}^da_i\phi_i(x,y,z).
\]
\par
If $j$ is an even positive integer, not equal to $6$ or a power of two, then $\phi_j$ has an absolutely irreducible factor over $\F_2$ (and so proves Corollary~\ref{cor:exceptional} in the case that $f$ is a monomial). This was conjectured by Segre and Bartocci~\cite{SegBar1971} and proved by Hernando and McGuire~\cite{HerMcG2012} (and can also be deduced with a few extra steps from an argument due to Zieve~\cite[Section 5]{Zie2013}).
\begin{lemma}[{\cite[Theorem 8]{HerMcG2012}}]
\label{lem:phi_j_abs_irreducible}
Let $j$ be an even positive integer, not equal to $6$ or a power of two. Then $\phi_j$ has an absolutely irreducible factor over~$\F_2$.
\end{lemma}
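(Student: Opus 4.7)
The plan is to produce an absolutely irreducible $\F_2$-factor of $\phi_j$ by a local analysis at a carefully chosen singular $\F_2$-point, combined with a case split driven by the binary expansion of $j$. Dehomogenize by setting $z=1$ to form the plane curve $\psi_j(x,y):=\phi_j(x,y,1)\in\F_2[x,y]$ of degree $j-2$; any absolutely irreducible $\F_2$-factor of $\psi_j$ lifts to an absolutely irreducible $\F_2$-factor of $\phi_j$, so it suffices to work with $\psi_j$. Write $j=2^{a_1}+\cdots+2^{a_m}$ with $1\le a_1<\cdots<a_m$. By hypothesis $m\ge 2$, and $(a_1,a_2)\ne(1,2)$ when $m=2$.

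The next step is a tangent-cone / Newton polygon computation at the symmetric singular point $(1,1)$. This point is natural because the denominator $(x+y)(x+z)(y+z)$ vanishes to order three at $(1,1,1)$, so $\psi_j$ is forced to be singular there. Translate by $(x,y)\mapsto(x+1,y+1)$ and consider $\widetilde\psi_j(x,y):=\psi_j(x+1,y+1)$. Using Lucas's theorem, the expansion of $(x+1)^j=\prod_i(x+1)^{2^{a_i}}$ retains only the monomials $x^{2^{a_i}}$, which controls the Newton polygon of $\widetilde\psi_j$ after clearing $xy(x+y)$. I would then identify an edge of this polygon whose associated edge polynomial has at least one root in $\F_2$; such a root produces an $\F_2$-rational analytic branch of $\psi_j$ at the origin, and its Zariski closure is an absolutely irreducible $\F_2$-factor of $\psi_j$. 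In the excluded cases the construction is forced to fail: for $j=2^k$ the Newton polygon degenerates to a single vertex (consistent with Lemma~\ref{lem:lin_o_poly}), and for $j=6$ the unique edge polynomial factors only over $\F_4\setminus\F_2$.

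The main obstacle is the last point: showing that for every admissible binary shape of $j$, the relevant edge polynomial has an $\F_2$-root, i.e., the tangent directions of $\psi_j$ at $(1,1)$ are not all permuted in nontrivial conjugate pairs by Frobenius. This is a combinatorial assertion about binomial coefficients modulo~$2$ which does not seem uniform; I expect one has to split into subcases based on the shape of $j$ (say $m\ge 3$ separately from $m=2$, and within $m=2$ further split according to $a_1$), and in each subcase exhibit an explicit edge whose slope and length force an $\F_2$-point. Handling the case $m=2$ with $a_1\ge 2$ looks delicate because the Newton polygon has few edges and the combinatorics of the binomial expansion is tight; this is essentially where the Hernando--McGuire proof does its real work, and I would expect any self-contained argument to mirror that geometric analysis.
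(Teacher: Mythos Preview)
The paper does not supply a proof of this lemma; it is quoted verbatim from Hernando--McGuire \cite[Theorem~8]{HerMcG2012}, with a side remark that it can also be extracted from Zieve's argument. So there is no in-paper proof to compare against, and your proposal has to stand on its own as a sketch of the cited result.

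As a sketch it is honest but not a proof. You correctly identify the natural strategy---dehomogenise, analyse the highly singular $\F_2$-point $(1,1)$, translate to the origin, and read off branches from the Newton polygon---and you correctly note that $j=6$ and $j=2^k$ are exactly where this local picture degenerates. But the substance of the lemma is the assertion that for every admissible $j$ some edge of the Newton polygon carries an $\F_2$-rational branch, and you do not establish this: you write that you ``expect one has to split into subcases'' and that the case $m=2$, $a_1\ge 2$ ``looks delicate''. That case analysis \emph{is} the proof; without it you have only restated the problem in local coordinates. There is also a technical slip: an $\F_2$-root of an edge polynomial yields an $\F_2$-rational analytic branch only when the root is \emph{simple} on that edge (so the branch with that initial slope is unique and hence Frobenius-fixed). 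A repeated $\F_2$-root forces a further Newton--Puiseux step, and the branches it separates need not be individually defined over $\F_2$. Your passage ``at least one root in $\F_2$; such a root produces an $\F_2$-rational analytic branch'' elides this, and in the $m=2$ regime the edge polynomials are short enough that multiplicity genuinely matters.
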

\par
If $f$ is an o-polynomial of $\F_q$, then either $q=2$ and $f$ has degree~$1$ or $q>2$ and $f$ has positive even degree by Lemma~\ref{lem:odd_powers}. Hence to prove Theorem~\ref{thm:main}, we can assume that $f$ has positive even degree. In the case that $f$ has positive even degree that is neither $6$ nor a power of two, we show that $\Phi_f$ has an absolutely irreducible factor over $\F_q$, and using Proposition~\ref{pro:Weil} prove the statement of Theorem~\ref{thm:main} in this case.
\begin{proposition}
\label{pro:6_or_2k}
Let $f\in\F_q[x]$ be of positive even degree not equal to $6$ or a power of two. Then $\Phi_f$ has an absolutely irreducible factor over $\F_q$.
\end{proposition}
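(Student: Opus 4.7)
The plan is to exploit that $\Phi_f = \sum_{i=0}^d a_i\phi_i$ with $a_d \neq 0$, and that $\phi_i$ is homogeneous of degree $i-2$ for $i \ge 2$ (and identically zero in characteristic two for $i\le 1$), so that the top-degree homogeneous component of $\Phi_f$ is exactly $a_d\phi_d$. By Lemma~\ref{lem:phi_j_abs_irreducible}, $\phi_d$ admits an absolutely irreducible factor $h\in\F_2[x,y,z]\subseteq\F_q[x,y,z]$, and I would like to lift $h$ to an absolutely irreducible factor of $\Phi_f$ defined over $\F_q$.

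The central auxiliary claim is that $h$ occurs with multiplicity exactly one in $\phi_d$. I would verify this by specialising to $z=0$: a direct computation gives $\phi_d(x,y,0)=(x^{d-1}+y^{d-1})/(x+y)$, and since $d$ is even the exponent $d-1$ is odd and coprime to the characteristic, so over $\overline{\F_2}$ the numerator splits into $d-1$ distinct linear factors, leaving $\phi_d(x,y,0)$ as a squarefree product of $d-2$ distinct linear forms. If $h^2\mid\phi_d$ then $h(x,y,0)^2\mid\phi_d(x,y,0)$; since $h$ is homogeneous and absolutely irreducible, either $h$ is a scalar multiple of $z$ (forcing $\phi_d(x,y,0)=0$, which is false) or $h(x,y,0)$ has positive degree, contradicting squarefreeness of $\phi_d(x,y,0)$.

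With the multiplicity-one claim in hand I would finish by Galois descent. Assume for contradiction that $\Phi_f$ has no absolutely irreducible factor over $\F_q$, and decompose $\Phi_f=F_1\cdots F_m$ in $\overline{\F_q}[x,y,z]$ into absolutely irreducible factors; by hypothesis every Galois orbit of $\mathrm{Gal}(\overline{\F_q}/\F_q)$ on $\{F_1,\dots,F_m\}$ has size at least two. Comparing leading forms yields $a_d\phi_d=F_1^*\cdots F_m^*$, and unique factorisation in $\overline{\F_q}[x,y,z]$ gives $h\mid F_{i_0}^*$ for some $i_0$. Picking any Galois element $\sigma$ with $\sigma(F_{i_0})\neq F_{i_0}$ and using $\sigma(h)=h$ shows that $h$ also divides $\sigma(F_{i_0})^*=F_{\sigma(i_0)}^*$, whence $h^2\mid a_d\phi_d$, contradicting the previous paragraph. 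The main obstacle is the multiplicity-one statement; once it is in place, the Galois descent is essentially a formality, and the specialisation argument goes through cleanly precisely because $d$ being even forces $d-1$ to be odd and hence $x^{d-1}+y^{d-1}$ to be separable over $\overline{\F_2}$.
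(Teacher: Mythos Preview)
Your argument is correct and follows essentially the same strategy as the paper: both pass to the top homogeneous part $a_d\phi_d$ of $\Phi_f$, invoke Lemma~\ref{lem:phi_j_abs_irreducible} to get an absolutely irreducible factor of $\phi_d$ over $\F_2$, check that this factor is unrepeated, and then descend. The only differences are tactical: the paper verifies that $\phi_d$ is square-free via the partial derivative in $x$ (which lies in $\F_2[y,z]$ since $d$ is even), whereas you specialise to $z=0$; and the paper packages the descent as an application of Lemma~\ref{lem:AMR} (intersecting the homogenised surface with the hyperplane $w=0$), whereas you carry out the Galois-orbit argument on leading forms by hand.
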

\par
Proposition~\ref{pro:6_or_2k} will follow from Lemma~\ref{lem:phi_j_abs_irreducible} and the following simple observation due to Aubry, McGuire, and Rodier~\cite{AubMcGRod2010} (in which $\overline{\F}_q$ is the algebraic closure of $\F_q$).
\begin{lemma}[{\cite[Lemma~2.1]{AubMcGRod2010}}]
\label{lem:AMR}
Let $S$ and $P$ be projective surfaces in $\P^3(\overline{\F}_q)$ defined over $\F_q$. If $S\cap P$ has a reduced absolutely irreducible component defined over $\F_q$, then $S$ has an absolutely irreducible component defined~over~$\F_q$.
\end{lemma}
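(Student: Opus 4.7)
My plan is to apply Lemma~\ref{lem:AMR} with $S$ the projective closure in $\P^3(\overline{\F}_q)$ of the affine surface $\Phi_f(x,y,z)=0$, and $P$ the plane at infinity. Writing $f(x)=\sum_{i=0}^d a_i x^i$ with $a_d\ne 0$, each $\phi_i$ is homogeneous of degree $i-2$ (vanishing identically for $i\in\{0,1\}$ in characteristic two), so $\Phi_f$ admits the natural homogenization
\[
F(x,y,z,w)=\sum_{i=2}^d a_i\, w^{d-i}\,\phi_i(x,y,z),
\]
which is homogeneous of degree $d-2$ and defines $S$ in $\P^3$. Every term with $i<d$ carries a positive power of $w$, so the section $F(x,y,z,0)=a_d\,\phi_d(x,y,z)$ cuts out $S\cap P$ as the plane curve $\phi_d=0$ inside $P\cong \P^2$.

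By hypothesis, $d$ is even, not equal to $6$, and not a power of two, so Lemma~\ref{lem:phi_j_abs_irreducible} furnishes an absolutely irreducible factor $h$ of $\phi_d$ defined over $\F_2\subseteq \F_q$. If $h$ occurs with multiplicity one in $\phi_d$, then its vanishing locus is a reduced absolutely irreducible component of $S\cap P$ defined over $\F_q$, and Lemma~\ref{lem:AMR} produces an absolutely irreducible component of $S$ defined over $\F_q$. Dehomogenizing (setting $w=1$) converts this component into an absolutely irreducible factor of $\Phi_f$ over $\F_q$, as required.

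The main obstacle is the reducedness condition: in characteristic two, and with $\deg\phi_d=d-2$ even, it is a priori possible that every absolutely irreducible factor of $\phi_d$ appears with multiplicity at least two (for instance, $\phi_d$ could itself be a perfect square). To rule this out I would use the explicit structure of $\phi_d$. Writing $d=2^a m$ with $m\ge 3$ odd (possible because $d$ is even and not a power of two), the identity $y^d+z^d=(y^m+z^m)^{2^a}$ exhibits the numerator of $\phi_d$ as a controlled Frobenius twist of that of $\phi_m$, and any perfect-power obstruction would have to survive both the Frobenius and the division by $(x+y)(x+z)(y+z)$. A direct computation, for example specializing $\phi_d$ to a line such as $z=0$ and showing the resulting one-variable polynomial has a simple root, should suffice to show that the absolutely irreducible factor from Lemma~\ref{lem:phi_j_abs_irreducible} can be chosen reduced, thereby closing the argument.
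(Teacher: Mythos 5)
Your proposal does not address the statement you were asked to prove. Lemma~\ref{lem:AMR} is the assertion that if the intersection $S\cap P$ of two projective surfaces has a reduced absolutely irreducible component defined over $\F_q$, then $S$ itself has an absolutely irreducible component defined over $\F_q$. Instead of proving this, you \emph{invoke} it as a black box and spend the entire argument showing how to apply it to $\Phi_f$ via the homogenisation and the plane at infinity --- that is, you have sketched the proof of Proposition~\ref{pro:6_or_2k} (essentially as the paper does it, down to the need to check that the factor of $\phi_d$ supplied by Lemma~\ref{lem:phi_j_abs_irreducible} occurs with multiplicity one), not a proof of Lemma~\ref{lem:AMR}. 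In the paper this lemma is quoted from Aubry, McGuire, and Rodier and not reproved, but a proof of it cannot mention $\Phi_f$ or $\phi_d$ at all: it must be a general statement about surfaces in $\P^3$.

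The missing argument runs along the following lines. Decompose $S$ into absolutely irreducible components $S_1,\dots,S_r$ over $\overline{\F}_q$; the Galois group $\mathrm{Gal}(\overline{\F}_q/\F_q)$ permutes them because $S$ is defined over $\F_q$. Let $C$ be the hypothesised reduced absolutely irreducible component of $S\cap P$ defined over $\F_q$. Being irreducible, $C$ is contained in $S_i\cap P$ for some $i$, and the reducedness of $C$ in $S\cap P$ forces $C$ to lie on exactly one $S_i$ (and with multiplicity one there): if $C$ lay on two distinct components, or on one component with higher multiplicity, it would appear with multiplicity at least two in $S\cap P$. Now for any $\sigma$ in the Galois group, $C=C^\sigma\subseteq S_i^\sigma\cap P$, so uniqueness gives $S_i^\sigma=S_i$; hence $S_i$ is Galois-stable and therefore defined over $\F_q$, which is the desired conclusion. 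None of this appears in your write-up, so as a proof of Lemma~\ref{lem:AMR} the proposal has a complete gap; as a proof of Proposition~\ref{pro:6_or_2k} it is essentially the paper's argument, except that the paper establishes square-freeness of $\phi_d$ cleanly by observing that the partial derivative of the numerator with respect to $x$ lies in $\F_2[y,z]$ (since $d$ is even), rather than by the specialisation you suggest.
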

\par
\begin{proof}[Proof of Proposition~\ref{pro:6_or_2k}]
Write
\[
f(x)=\sum_{i=0}^da_ix^i,
\]
where $a_d\ne 0$, and consider the homogenisation of $\Phi_f$, namely
\[
\widetilde{\Phi}_f(w,x,y,z)=\sum_{i=0}^da_i\phi_i(x,y,z)\,w^{d-i}.
\]
The intersection of the projective surface defined by $\widetilde{\Phi}_f(w,x,y,z)=0$ with the plane defined by $w=0$ is the projective curve defined by $\phi_d(x,y,z)=0$ and $w=0$. By Lemma~\ref{lem:phi_j_abs_irreducible}, $\phi_d$ has an absolutely irreducible factor over $\F_q$. Notice that $\phi_d$ is square-free, which follows from the fact that the partial derivative of
\[
x(y^d+z^d)+y(x^d+z^d)+z(x^d+y^d)
\]
with respect to $x$ is in $\F_2[y,z]$ (using that $d$ is even) and from symmetry. Therefore, Lemma~\ref{lem:AMR} implies that $\widetilde{\Phi}_f$ (and therefore $\Phi_f$) has an absolutely irreducible factor over $\F_q$.
\end{proof}
\par
In view of Proposition~\ref{pro:6_or_2k} and Lemma~\ref{lem:degree_6}, it remains to prove Theorem~\ref{thm:main} when the degree of $f$ is a power of two. In view of Lemmas~\ref{lem:odd_powers} and~\ref{lem:lin_o_poly}, this case follows from Proposition~\ref{pro:Weil} and the following result.
\begin{proposition}
\label{pro:f_power_of_2}
Let $k$ be an integer satisfying $k\ge 2$ and let $f\in\F_q[x]$ be a polynomial of the form
\[
f(x)=\sum_{i=1}^{2^{k-1}}a_{2i}\,x^{2i}
\]
such that $a_{2^k}\ne 0$ and such that the degree of at least one term in $f$ is not a power of two. Then $\Phi_f$ is absolutely irreducible. 
\end{proposition}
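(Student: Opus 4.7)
Suppose for contradiction that $\Phi_f = PQ$ is a nontrivial factorisation in $\overline{\F}_q[x,y,z]$. My plan proceeds in three stages.

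First, I would pass to the linear coordinates $u = x+y$, $v = x+z$ (so that $y+z = u+v$ in characteristic two). A direct Hasse-derivative computation yields
\[
\Phi_f(x,u,v) \;=\; \sum_{i\ge 2} D_i[f](x)\, h_{i-1}(u,v), \qquad h_m(u,v) \;=\; \frac{u^m+v^m}{u+v},
\]
where $D_i[f](x) = \sum_j a_j \binom{j}{i} x^{j-i}$ is the $i$-th Hasse derivative of $f$. Applying Lucas' theorem to $\binom{2j}{i}$ and using that every exponent of $f$ is even, I would observe that every exponent of $x$ appearing in $\Phi_f$ is even and that every $(u,v)$-bidegree appearing in $\Phi_f$ is even. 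The top $(u,v)$-bidegree component of $\Phi_f$ equals $a_{2^k}\, h_{2^k-1}(u,v) = a_{2^k}\prod_\zeta(u+\zeta v)$, a product of $2^k-2$ pairwise distinct linear forms over $\overline{\F}_q$ indexed by the nontrivial $(2^k-1)$-st roots of unity $\zeta$.

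Second, matching top $(u,v)$-parts in $\Phi_f = PQ$ forces the top $(u,v)$-parts of $P$ and $Q$ to be $x$-independent, coprime, and complementary products of these linear forms. Their coprimality propagates through successive $(u,v)$-bidegrees: the vanishing of all odd-$(u,v)$-bidegree components of $\Phi_f$ forces, by a straightforward induction on the bidegree, both $P$ and $Q$ to have only even $(u,v)$-bidegrees; a parallel induction using $\partial_x\Phi_f = 0$ forces both $P$ and $Q$ to have only even powers of $x$. Substituting $X = x^2$ and writing $P = P'(X,u,v)$, $Q = Q'(X,u,v)$ would therefore produce a nontrivial factorisation $\Theta = P' Q'$ in $\overline{\F}_q[X,u,v]$, where
\[
\Theta(X,u,v) \;=\; \sum_{m\ge 1} D_m[g](X)\, h_{2m-1}(u,v), \qquad g(X) \;=\; \sum_j a_{2j}\, X^j,
\]
and $g$ has degree $2^{k-1}$ with at least one term of non-power-of-two exponent by hypothesis.

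The hard part will be ruling out the factorisation $\Theta = P'Q'$. I would view $\Theta$ as a polynomial in $X$ over $\overline{\F}_q(u,v)$; its top $(u,v)$-part remains $a_{2^k}\, h_{2^k-1}$, so any putative top $(u,v)$-parts of $P'$ and $Q'$ must again be $X$-independent complementary products of the forms $u+\zeta v$, but the even-$x$-parity trick is no longer available to propagate. The non-power-of-two exponent of $g$ would force, via Lucas' theorem, an explicit nonzero coefficient at an intermediate $(u,v)$-bidegree of $\Theta$, and I would show this coefficient is incompatible with every admissible factorisation by comparing pole orders along the divisors $u+\zeta v = 0$ and applying an Artin--Schreier criterion over $\overline{\F}_q(u,v)$. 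If instead $g$ itself has all-even exponents, I would iterate the whole reduction; the recursion must terminate by the non-power-of-two hypothesis, producing a reduced polynomial with an odd-exponent term to which the pole-order analysis applies.
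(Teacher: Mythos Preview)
Your Stages~1 and~2 are essentially correct: the Hasse-derivative expansion in coordinates $u=x+y$, $v=x+z$ is valid, the top $(u,v)$-part is indeed $a_{2^k}h_{2^k-1}(u,v)$ splitting into $2^k-2$ distinct linear forms, and the coprimality of $P_s,Q_t$ does propagate downward to force every $P_{s-r}$ (and $Q_{t-r}$) to have only even $x$-degree, so the substitution $X=x^2$ is legitimate. (One small point: you assert $P,Q$ have only \emph{even} $(u,v)$-degrees, but $s+t=2^k-2$ only forces $s,t$ to have the same parity; fortunately you do not actually need this for the $X=x^2$ step.)

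The gap is Stage~3. After the reduction you are left with $\Theta=\sum_{m\ge 1}D_m[g](X)\,h_{2m-1}(u,v)$, which is \emph{not} of the form $\Phi_g$, so nothing you have proved applies to it. Your proposed endgame---``comparing pole orders along $u+\zeta v=0$ and applying an Artin--Schreier criterion''---is not a proof: you have not said which Artin--Schreier extension is in play, what quantity has the wrong pole order, or how the non-power-of-two exponent of $g$ enters. This is precisely the heart of the problem, and it is left as a slogan. The iteration you describe does terminate, but it terminates at the case you cannot handle.

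The paper's argument avoids your Stage~3 entirely by grading $\Phi_f$ by \emph{total} degree in $x,y,z$ (so the degree-$(2i-2)$ part is exactly $a_{2i}\phi_{2i}$) and using the specialisation $z=x$. The point is the identity
\[
\phi_{2i}(x,y,x)=\bigl((x^i+y^i)/(x+y)\bigr)^2,
\]
so in particular $P_s(x,y,x)Q_t(x,y,x)=a_{2^k}(x+y)^{2^k-2}$ and both $P_s(x,y,x)$, $Q_t(x,y,x)$ are pure powers of $x+y$. Let $I$ be minimal with $a_{2^k-2I}\ne 0$. Coprimality gives $P_{s-1}=\cdots=P_{s-2I+1}=0$ (exactly your induction), and at the next step one bounds the $(x+y)$-adic valuation of $\phi_{2^k-2I}(x,y,x)$ by $2(2^\ell-1)$, where $2^\ell\Vert I$; this forces $s\le 2(I-1)$. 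Hence $P=P_s$ is homogeneous, so $P_s$ divides every nonzero $a_{2i}\phi_{2i}$. Since $P_s$ is a product of forms $x+z+\theta(y+z)$ with $\theta\in\F_{2^k}\setminus\F_2$, substituting $x=\theta y+(\theta+1)z$ into $\phi_n$ and expanding gives $\binom{n}{m}\equiv 0\pmod 2$ for $0<m<n$, whence Lucas forces $n$ to be a power of two---contradicting the hypothesis on $f$. The specialisation $z=x$ is the missing idea; it replaces your undeveloped Stage~3 with a short valuation argument.
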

\par
To prove Proposition~\ref{pro:f_power_of_2}, we use the following corollary to Lucas's theorem (see~\cite{Fin1947}, for example).
\begin{lemma}
\label{lem:Lucas}
The binomial coefficient $\tbinom{n}{m}$ is even if and only if at least one of the base-$2$ digits of $m$ is greater than the corresponding digit of $n$.
\end{lemma}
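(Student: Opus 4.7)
The plan is to reduce the statement to a generating-function identity in $\F_2[x]$, which amounts to proving the $p=2$ case of Lucas's theorem directly. First I would write $n=\sum_{i\ge 0} n_i 2^i$ with $n_i\in\{0,1\}$, and view $\binom{n}{m}$ modulo $2$ as the coefficient of $x^m$ in $(1+x)^n$ regarded as an element of $\F_2[x]$. The point is that over $\F_2$ one has the Frobenius identity $(1+x)^{2^i}=1+x^{2^i}$, so
\[
(1+x)^n=\prod_{i\ge 0}(1+x)^{n_i 2^i}=\prod_{i:\,n_i=1}(1+x^{2^i})\in\F_2[x].
\]

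Next I would expand the right-hand product. Each term in the expansion is a product $\prod_{i\in S} x^{2^i}$ for some subset $S$ of $\{i:n_i=1\}$, contributing $x^{\sum_{i\in S}2^i}$. Since the exponents $\sum_{i\in S}2^i$ arising from distinct subsets $S$ are all distinct (they are precisely the integers whose binary support is contained in $\{i:n_i=1\}$), the coefficient of $x^m$ in $(1+x)^n$ over $\F_2$ equals $1$ if the binary support of $m$ is contained in that of $n$, and $0$ otherwise. Equivalently, $\binom{n}{m}$ is odd if and only if $m_i\le n_i$ for every~$i$, where $m=\sum m_i 2^i$ is the base-$2$ expansion of~$m$.

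Negating gives the lemma: $\binom{n}{m}$ is even if and only if there exists some index $i$ with $m_i>n_i$, i.e., $m_i=1$ and $n_i=0$. There is essentially no obstacle here; the only thing to be a little careful about is to note that the expansion above is a sum over distinct monomials (no cancellation occurs), which is what makes reading off the parity of $\binom{n}{m}$ immediate.
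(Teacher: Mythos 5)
Your argument is correct and complete: the identity $(1+x)^n=\prod_{i:\,n_i=1}(1+x^{2^i})$ over $\F_2$, together with uniqueness of binary representations (so no two monomials in the expansion collide and no cancellation occurs), gives exactly the stated parity criterion. The paper itself offers no proof of this lemma --- it is stated as a corollary of Lucas's theorem with a citation to Fine --- so your self-contained generating-function derivation of the $p=2$ case is the standard argument and is a perfectly acceptable substitute.
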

\par
\begin{proof}[Proof of Proposition~\ref{pro:f_power_of_2}]
Suppose, for a contradiction, that $\Phi_f$ is not absolutely irreducible. Let $\phi_j$ be defined by~\eqref{def_phi}. Our proof relies on the following claim. 
\begin{claim}
There exists $\theta\in\F_{2^k}-\F_2$ such that for all $i\in\{1,2,\dots,2^{k-1}\}$, we have
\[
a_{2i}=0\quad\text{or}\quad \text{$x+z+\theta (y+z)$ divides $\phi_{2i}(x,y,z)$}. 
\]
\end{claim}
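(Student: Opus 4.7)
The plan is to show that under the assumption $\Phi_f=GH$ is a nontrivial factorization in $\overline{\F}_q[x,y,z]$, the polynomial $\Phi_f$ must have a linear factor of the form $L_\theta:=x+z+\theta(y+z)$ for some $\theta\in\F_{2^k}\setminus\F_2$. Since $L_\theta$ is homogeneous of degree one and $\Phi_f=\sum_i a_{2i}\phi_{2i}$ is already the decomposition of $\Phi_f$ into homogeneous components of pairwise distinct degrees $2i-2$, divisibility $L_\theta\mid\Phi_f$ is equivalent to requiring $L_\theta\mid a_{2i}\phi_{2i}$ for every $i$, which is precisely the conclusion of the claim.

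First I would obtain the explicit factorization of the leading form. Setting $u=x+y$ and $v=x+z$ and using the Frobenius identity $(y+z)^{2^k}=y^{2^k}+z^{2^k}$, the numerator of $\phi_{2^k}$ simplifies to $uv(u^{2^k-1}+v^{2^k-1})$, so that
\[\phi_{2^k}=\frac{u^{2^k-1}+v^{2^k-1}}{u+v}=\prod_{\alpha\in\F_{2^k}^*\setminus\{1\}}(u+\alpha v),\]
a product of $2^k-2$ distinct linear forms; after the reparametrization $\alpha=1+1/\theta$ this reads $\phi_{2^k}=\prod_{\theta\in\F_{2^k}\setminus\F_2}L_\theta$. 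Next comes a parity argument. Decomposing $G=\sum_j G^{(j)}$ and $H=\sum_j H^{(j)}$ into homogeneous parts, the fact that $\Phi_f$ contains only even-degree homogeneous components, together with the squarefreeness of the leading product $G^{(r)}H^{(s)}=a_{2^k}\phi_{2^k}$ (so $G^{(r)}$ and $H^{(s)}$ are coprime), allows a descending induction on degree to force $G^{(j)}=0$ for $j\not\equiv r\pmod 2$ and $H^{(j)}=0$ for $j\not\equiv s\pmod 2$.

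Now I would take $G$ to be an absolutely irreducible factor of $\Phi_f$ and write $G^{(r)}=c\prod_{\theta\in T}L_\theta$ for a nonempty subset $T\subsetneq\F_{2^k}\setminus\F_2$ of size $r$. In the easy case $r=1$ the parity constraint forces $G=G^{(1)}=cL_{\theta^*}$ for the unique $\theta^*\in T$ (no constant term is allowed), giving $L_{\theta^*}\mid\Phi_f$ and completing the claim. The hard case is when every absolutely irreducible factor has degree at least two; here I would use the hypothesis that some $a_{2i_0}\neq 0$ with $2i_0$ not a power of two. The plan is to substitute $x=\theta y+(1+\theta)z$ (so $L_\theta=0$) into both $\Phi_f$ and $GH$ for $\theta\in T$, compute $\phi_{2i}|_{L_\theta=0}$ explicitly as a quotient whose numerator involves the coefficients $\binom{2i}{m}\theta^m(1+\theta)^{2i-m}$, and invoke Lucas's theorem to pinpoint which $(y,z)$-monomials survive modulo two. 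Matching the resulting expressions on both sides of $\Phi_f|_{L_\theta=0}=(G|_{L_\theta=0})(H|_{L_\theta=0})$ degree by degree should then force $a_{2i_0}=0$, contradicting the hypothesis. I expect this matching---controlled by the parities of the binomial coefficients via Lucas---to be the main technical obstacle.
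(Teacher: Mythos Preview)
Your setup is sound: the factorization of $\phi_{2^k}$ into the linear forms $L_\theta$, the observation that a linear homogeneous factor of $\Phi_f$ must divide each $a_{2i}\phi_{2i}$, and the coprimeness of the leading parts $G^{(r)},H^{(s)}$ are all correct and match the paper. Your parity induction is also valid, and the case $r=1$ is fine.

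The gap is in the ``hard case'' $r\ge 2$. Your parity argument only kills every \emph{other} lower homogeneous component of $G$; it does not show $G$ is homogeneous, and without that you cannot conclude $L_\theta\mid\Phi_f$. Your proposed remedy---restricting to $L_\theta=0$ for $\theta\in T$ and matching via Lucas---does not work: the restriction $\Phi_f|_{L_\theta=0}=(G|_{L_\theta=0})(H|_{L_\theta=0})$ is just the specialisation of an identity you already have, and since $L_\theta$ only divides the \emph{top} part $G^{(r)}$, neither side vanishes and there is no evident obstruction to exploit. You are in effect trying to reuse the post-Claim computation (the one that, \emph{given} $L_\theta\mid\phi_{2i}$, forces $2i$ to be a power of two) in a setting where its hypothesis is precisely what you are trying to prove.

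The paper's missing idea is a different specialisation, namely $z=x$. One has the identity $\phi_{2i}(x,y,x)=\bigl((x^i+y^i)/(x+y)\bigr)^2$, so in particular $P_s(x,y,x)Q_t(x,y,x)=a_{2^k}(x+y)^{2^k-2}$, i.e.\ both leading parts become pure powers of $x+y$ on $z=x$. Let $I$ be the least positive integer with $a_{2^k-2I}\ne 0$; then not just parity but the vanishing of the intermediate $a$'s gives $P_{s-1}=\cdots=P_{s-2I+1}=0$. At the next step, writing $I=2^\ell e$ with $e$ odd, the identity shows that the exact power of $x+y$ dividing $\phi_{2^k-2I}(x,y,x)$ is at most $2(2^\ell-1)$, and comparing with the equation $a_{2^k-2I}\phi_{2^k-2I}=P_sQ_{t-2I}+P_{s-2I}Q_t$ restricted to $z=x$ forces $s\le 2(2^\ell-1)\le 2(I-1)$. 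Together with the vanishing of $P_{s-1},\dots,P_{s-2I+1}$ this yields $P_i=0$ for all $i<s$, so the smaller factor is \emph{homogeneous}: $P=P_s$. Then $P_s$ divides every homogeneous component $a_{2i}\phi_{2i}$, and any linear factor $L_\theta$ of $P_s$ witnesses the claim. This $(x+y)$-adic bound via the specialisation $z=x$ is the step your proposal lacks.
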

\par
We defer the proof of the claim and first deduce the statement in the proposition from the claim. Let $n$ be an even integer such that $a_n$ is nonzero. By putting $x=\theta y+(\theta+1)z$ into
\[
(x+y)(x+z)(y+z)\phi_n(x,y,z),
\]
we see from the claim that
\[
yz^n+zy^n+(y^n+z^n)(\theta y+(\theta+1)z)+(y+z)(\theta y+(\theta+1)z)^n=0,
\]
which implies that
\[
(\theta+\theta^n)y^n+((\theta+1)+(\theta+1)^n)z^n\\
+\sum_{m=1}^{n-1}\binom{n}{m}\theta^my^m(\theta+1)^{n-m}z^{n-m}=0.
\]
Comparing coefficients, we find that $\binom{n}{m}$ is even for each $m\in\{1,\dots,n-1\}$. It is then readily verified that Lemma~\ref{lem:Lucas} implies that $n$ must be a power of two. Therefore, the degree of every term in $f$ is a power of two, contradicting our assumption. Hence $\Phi_f$ is absolutely irreducible.
\par
To prove the claim, we repeatedly use the identity
\begin{equation}
\phi_{2i}(x,y,x)=\bigg(\frac{x^i+y^i}{x+y}\bigg)^2\quad\text{for each $i\ge 1$},   \label{eqn:phi_i_xyx}
\end{equation}
which is elementary to verify. We also use the expansion
\[
\Phi_f=a_2\phi_2+a_4\phi_4+\cdots+a_{2^k}\phi_{2^k}.
\]
Since $\Phi_f$ is not absolutely irreducible by assumption, we may write
\begin{equation}
a_2\phi_2+a_4\phi_4+\cdots+a_{2^k}\phi_{2^k}=(P_s+P_{s-1}+\cdots+P_0)(Q_t+Q_{t-1}+\cdots+Q_0),   \label{eqn:F_expansion_2k}
\end{equation}
where $P_i$ and $Q_i$ are zero or homogeneous polynomials of degree $i$, defined over the algebraic closure of $\F_q$, and $s,t>0$ and $P_sQ_t$ is nonzero. Without loss of generality we may also assume that $s\le t$. We have
\begin{align}
\phi_{2^k}(x,y,z)&=\frac{(x+z)^{2^k-1}+(y+z)^{2^k-1}}{x+y}   \nonumber\\[1.5ex]
&=\prod_{\alpha\in\F_{2^k}-\F_2}\big(x+z+\alpha (y+z)\big).   \label{eqn:phi_2k_factorisation}
\end{align}
Since $a_{2^k}\phi_{2^k}=P_sQ_t$ by~\eqref{eqn:F_expansion_2k}, we find from~\eqref{eqn:phi_2k_factorisation} that $P_s$ and $Q_t$ are coprime and from~\eqref{eqn:phi_i_xyx} that
\begin{equation}
P_s(x,y,x)Q_t(x,y,x)=a_{2^k}(x+y)^{2^k-2}.   \label{eqn:PsQt_xyx}
\end{equation}
From~\eqref{eqn:F_expansion_2k} we have
\[
0=P_sQ_{t-1}+P_{s-1}Q_t.
\]
Since $P_s$ and $Q_t$ are coprime, we find that $P_s\mid P_{s-1}$, thus $P_{s-1}=0$ by a degree argument. Let $I$ be the smallest positive integer $i$ such that $a_{2^k-2i}$ is nonzero (this $I$ exists and satisfies $I<2^{k-1}$ by our assumed form of $f$). With a simple induction, involving the preceding argument, we conclude that
\begin{equation}
P_{s-1}=\dots=P_{s-2I+1}=0.   \label{eqn:PQ_zero}
\end{equation} 
In the next step we have from~\eqref{eqn:F_expansion_2k} that
\[
a_{2^k-2I}\phi_{2^k-2I}=P_sQ_{t-2I}+P_{s-2I}Q_t,
\]
which using~\eqref{eqn:PsQt_xyx} gives
\begin{multline}
a_{2^k-2I}\phi_{2^k-2I}(x,y,x)\\
=\beta a_{2^k}(x+y)^sQ_{t-2I}(x,y,x)+\beta^{-1}(x+y)^tP_{s-2I}(x,y,x)   \label{eqn:eqn:2k-2I}
\end{multline}
for some nonzero $\beta$ in the algebraic closure of $\F_q$. Write $I=2^\ell e$ for some nonnegative integer $\ell$ and some positive odd integer $e$. Using~\eqref{eqn:phi_i_xyx}, we have
\[
\phi_{2^k-2I}(x,y,x)=\Bigg(\frac{\big(x^{2^{k-\ell-1}-e}+y^{2^{k-\ell-1}-e}\big)^{2^\ell}}{x+y}\Bigg)^2.
\]
Since $2^{k-\ell-1}-e$ is odd, the polynomial
\[
x^{2^{k-\ell-1}-e}+y^{2^{k-\ell-1}-e}
\]
splits into distinct factors, and therefore the largest power of $x+y$ dividing $\phi_{2^k-2I}(x,y,x)$ is at most~$2(2^\ell-1)$. Hence, since $a_{2^k-2I}\ne 0$ and $s\le t$ by assumption, we have in view of~\eqref{eqn:eqn:2k-2I} that
\[
s\le 2(2^\ell-1)\le 2(I-1).
\]
Therefore, we find from~\eqref{eqn:PQ_zero} that $P_i=0$ unless $i=s$ and then from~\eqref{eqn:F_expansion_2k} that
\[
a_{2^k-2j}\phi_{2^k-2j}=P_sQ_{t-2j}\quad\text{for each $j\in\{0,1,\dots,2^{k-1}-1\}$}.
\]
This shows that $P_s$ divides $a_{2^k-2j}\phi_{2^k-2j}$ for each $j\in\{0,1,\dots,2^{k-1}-1\}$, which in view of $a_{2^k}\phi_{2^k}=P_sQ_t$ and~\eqref{eqn:phi_2k_factorisation} proves our claim.
\end{proof}


\providecommand{\bysame}{\leavevmode\hbox to3em{\hrulefill}\thinspace}
\providecommand{\MR}{\relax\ifhmode\unskip\space\fi MR }
\providecommand{\MRhref}[2]{%
  \href{http://www.ams.org/mathscinet-getitem?mr=#1}{#2}
}
\providecommand{\href}[2]{#2}


\end{document}